\documentclass{amsart}
\usepackage{amsmath,amsfonts,amsthm,amssymb,indentfirst,epic,url,graphics,needspace}

\setlength{\textwidth}{6.5in}
\setlength{\textheight}{9.2in}
\setlength{\evensidemargin}{0in}
\setlength{\oddsidemargin}{0in}
\setlength{\topmargin}{-.5in}
\sloppy

\setlength{\mathsurround}{.167em}

\theoremstyle{plain}
\newtheorem{theorem}{Theorem}
\newtheorem{lemma}[theorem]{Lemma}
\newtheorem{corollary}[theorem]{Corollary}
\newtheorem{proposition}[theorem]{Proposition}

\newtheorem{definition}[theorem]{Definition}

\newcommand{\Z}{\mathbb{Z}}
\renewcommand{\r}{\mathrm}

\raggedbottom

\begin{document}

\begin{center}
\texttt{Comments, corrections,
and related references welcomed, as always!}\\[.5em]
{\TeX}ed \today
\vspace{2em}
\end{center}

\title[Completeness results for metrized rings and lattices]%
{Completeness results for metrized rings and lattices}%

\author{George M. Bergman}
\email{gbergman@math.berkeley.edu}
\urladdr{https://math.berkeley.edu/~gbergman}
\address{Department of Mathematics\\University of California\\
Berkeley, CA 94720-3840, USA}

\thanks{
\url{https://arxiv.org/abs/1808.04455}\,.
After publication of this note, updates, errata,
related references etc., if found, will be recorded at
\url{https://math.berkeley.edu/~gbergman/papers/}
}

\dedicatory{Dedicated\\[.1em]
to George Gr\"{a}tzer\\[.1em]
and
to the memory of Jonathan Gleason}

\subjclass[2010]{Primary: 06B35, 13A15, 13J10, 54E50.
Secondary: 06E10, 06E20, 28A05.}

\keywords{Complete topological ring without closed prime ideals;
measurable sets modulo sets of measure zero;
lattice complete under a metric}

\begin{abstract}
The Boolean ring $B$ of measurable subsets of the unit interval, modulo
sets of measure zero, has proper {\em radical} ideals
(e.g., $\{0\})$ that are closed under the natural metric, but has
no {\em prime} ideals closed under that metric;
hence closed radical ideals are not, in general, intersections
of closed prime ideals.
Moreover, $B$ is known to be complete in its metric.
Together, these facts answer a question posed by J.\,Gleason.
From this example, rings of arbitrary characteristic
with the same properties are obtained.

The result that $B$ is complete in its metric is generalized to
show that if $L$ is a lattice given with a metric satisfying identically
{\em either} the inequality $d(x\vee y,\,x\vee z)\leq d(y,z)$
{\em or} the inequality $d(x\wedge y,\,x\wedge z)\leq d(y,z),$
and if in $L$ every increasing Cauchy sequence
converges and every decreasing Cauchy sequence converges, then every
Cauchy sequence in $L$ converges;
i.e., $L$ is complete as a metric space.

We show by example that if the above inequalities
are replaced by the weaker conditions
$d(x,\,x\vee y)\leq d(x,y),$ respectively
$d(x,\,x\wedge y)\leq d(x,y),$ the completeness conclusion can fail.
We end with two open questions.
\end{abstract}
\maketitle

\section{Overview: a ring-theoretic question, culminating in a lattice-theoretic result}\label{S.intro}

A standard result of ring theory says that if $I$ is an ideal
of a commutative ring $R,$ then the nil radical of $I$ (the
ideal of elements having some power in $I)$
is the intersection of the prime ideals of $R$ containing~$I$
\cite[Proposition~10.2.9, p.\,352]{PMC}.

Jonathan Gleason\footnote{
	Jonathan Gleason, then a graduate student in mathematics at the
	University of California, Berkeley, raised this question
	shortly before his unexpected tragic death in January, 2018.}
(personal communication) asked the present author
about a possible generalization of that result.
Namely, suppose $R$ is a topological commutative ring.
For any ideal $I$ of $R,$
let $\sqrt I$ denote the least {\em closed} ideal
$J\supseteq I$ such that $J$ contains
every element $x$ such that $x^n\in J$ for some $n\geq 1.$
Must $\sqrt I$ be the intersection of
all closed {\em prime} ideals containing $I$?
If not in general, does this become true
if $R$ is complete with respect to the given topology?

We shall see that the answer
is negative:  If $R$ is the Boolean ring of measurable
subsets of the unit interval modulo sets of measure zero,
topologized using the metric given by
the measure of the symmetric difference of such sets,
then $R$ is complete in that metric, and $\{0\} = \sqrt{\{0\}}$
(defined as above); but $R$ has no closed prime ideals, so
$\{0\}$ is not an intersection of such ideals.
We give the details in~\S\ref{S.Bool}, and note in~\S\ref{S.nonBool}
how to get, from this characteristic-$\!2$ example,
examples of arbitrary characteristic.

The one not-so-obvious property of our example
is the completeness of $B$ as a metric space.
In~\S\ref{S.lattice} (which
is independent of \S\S\ref{S.Bool}-\ref{S.nonBool})
we note that this can be deduced from a standard result
of measure theory, and then prove a general result on when
a metrized {\em lattice} is complete, which yields an alternate proof.

In~\S\ref{S.ceg} we give a curious counterexample to that
general completeness result under a weakened hypothesis.


\section{The Boolean example}\label{S.Bool}

Most of the desired properties of the example sketched above
are straightforward to verify.

Recall that for any set $X,$ the subsets of $X$ form
a Boolean ring under the operations
\begin{equation}\begin{minipage}[c]{35pc}\label{d.Boole}
$0=\emptyset,\qquad 1=X,\qquad
S+T=S\cup T \setminus (S\cap T),\qquad S\,T=S\cap T.$
\end{minipage}\end{equation}

Now let $B_0$ be the set of measurable subsets of the unit interval
$[0,1],$ and for $S\in B_0,$ let $\mu(S)\in[0,1]$ be its measure.
$B_0$ clearly forms a subring of the Boolean ring of subsets
of $[0,1],$ and
for any $S,\,T\in B_0,$ we see from the above definition of $S+T$ that
\begin{equation}\begin{minipage}[c]{35pc}\label{d.muS+muT}
$\mu(S\cup T)
\ =\ \mu(S+T)+\mu(S\cap T)
\ =\ \mu(S)+\mu(T) - \mu(S\cap T).$
\end{minipage}\end{equation}

For $S,\,T\in B_0,$ let
\begin{equation}\begin{minipage}[c]{35pc}\label{d.d0}
$d_0(S,\,T)\ =\ \mu(S+T).$
\end{minipage}\end{equation}
Then $d_0$ is a pseudometric, i.e., for all $S,\,T,\,U\in B_0,$
\begin{equation}\begin{minipage}[c]{35pc}\label{d.d0geq0}
$d_0(S,\,T)\ \geq\ 0,$
\end{minipage}\end{equation}
\begin{equation}\begin{minipage}[c]{35pc}\label{d.d0SS}
$S=T\ \implies\ d_0(S,\,T)=0,$
\end{minipage}\end{equation}
\begin{equation}\begin{minipage}[c]{35pc}\label{d.d0TS}
$d_0(T,\,S)\ =\ d_0(S,\,T),$
\end{minipage}\end{equation}
\begin{equation}\begin{minipage}[c]{35pc}\label{d.d0STU}
$d_0(S,\,T)+d_0(T,\,U)\ \geq\ d_0(S,U).$
\end{minipage}\end{equation}
Here~\eqref{d.d0geq0}-\eqref{d.d0TS} are immediate.
To get~\eqref{d.d0STU}, note that the second
equality of~\eqref{d.muS+muT} gives
the inequality $\mu(S)+\mu(T) \geq \mu(S+T).$
Putting $S+T$ and $T+U$ in place of $S$ and $T$
in that relation gives~\eqref{d.d0STU}.

The definition~\eqref{d.d0}, and the identities of Boolean rings,
show that the Boolean operations behave nicely under~$d_0$:
\begin{equation}\begin{minipage}[c]{35pc}\label{d.d0_ct}
$d_0(S{+}U,\,T{+}U)\ =\ d_0(S,\,T),$
\end{minipage}\end{equation}
\begin{equation}\begin{minipage}[c]{35pc}\label{d.d0SU,TU}
$d_0(SU,\,TU)\ \leq\ d_0(S,\,T).$
\end{minipage}\end{equation}
These relations show in particular that if $S$ and $T$ are close
to one another under $d_0,$ then the results of adding
$U$ to $S$ and $T$ are also
close to one another, as are the results of multiplying
$S$ and $T$ by $U,$ in each case in a uniform way, whence
addition and multiplication are continuous with respect to $d_0.$

Now let $B$ be the quotient ring
\begin{equation}\begin{minipage}[c]{35pc}\label{d.B}
$B\ =\ B_0\,/\,\{S\mid\mu(S)=0\}.$
\end{minipage}\end{equation}
Let us write $[S]\in B$ for the residue class of
an element $S\in B_0,$ and define
\begin{equation}\begin{minipage}[c]{35pc}\label{d.d}
$d([S],[T])\ =\ d_0(S,\,T)\ =\ \mu(S+T)$ \ for \ $[S],\,[T]\in B.$
\end{minipage}\end{equation}
Then~\eqref{d.d0geq0}-\eqref{d.d0SU,TU} clearly carry over to $d,$ with
the ``$\!\!\implies\!\!$'' of~\eqref{d.d0SS} strengthened
to ``$\!\!\iff\!\!$''.
Thus, $d$ is a metric, and

\begin{lemma}\label{L.B}
The operations of the Boolean ring $B$ of measurable subsets
of $[0,1]$ modulo sets of measure zero are continuous in
the metric $d$ of~\eqref{d.d}.\qed
\end{lemma}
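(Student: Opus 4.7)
The plan is to deduce uniform (in fact Lipschitz) continuity of addition and multiplication on $B$ directly from the two estimates \eqref{d.d0_ct} and \eqref{d.d0SU,TU}, which the paragraph just before \eqref{d.B} has already shown pass verbatim to the quotient metric $d$ on $B$. Once I have Lipschitz continuity separately in each variable, joint continuity at any point follows from a single application of the triangle inequality.

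Concretely, for addition I would insert the intermediate term $[S]+[T']$ and estimate
\[
d([S']+[T'],\,[S]+[T]) \ \leq\ d([S']+[T'],\,[S]+[T']) + d([S]+[T'],\,[S]+[T]),
\]
then apply the quotient form of the translation-invariance \eqref{d.d0_ct} to each summand, getting the bound $d([S'],[S]) + d([T'],[T])$. For multiplication I would do exactly the same thing, inserting $[S][T']$ and using the contractive estimate \eqref{d.d0SU,TU} in place of \eqref{d.d0_ct}, obtaining again the bound $d([S'],[S])+d([T'],[T])$. No separate argument is needed for the additive inverse, because $-x=x$ in a Boolean ring; and the multiplicative structure has no unary operations to worry about beyond the constants~$0$ and~$1$.

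I do not expect any real obstacle: the lemma is essentially a repackaging of the two estimates already in hand, which is presumably why the author marks the statement with \qed rather than writing out a proof. The only thing to be careful about is verifying that \eqref{d.d0_ct} and \eqref{d.d0SU,TU} genuinely descend to $B$; but since both depend on their arguments only through the measure of a symmetric difference, and $d$ is well-defined precisely because that measure is zero on the ideal modded out in \eqref{d.B}, the descent is automatic.
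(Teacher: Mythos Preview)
Your proposal is correct and follows essentially the same approach as the paper: the author establishes \eqref{d.d0_ct} and \eqref{d.d0SU,TU}, remarks that these give uniform continuity of addition and multiplication separately in each variable, and then observes that \eqref{d.d0geq0}--\eqref{d.d0SU,TU} ``clearly carry over to $d$,'' which is exactly your descent argument. Your explicit insertion of the intermediate term and use of the triangle inequality merely spells out the step the paper leaves implicit in the sentence following~\eqref{d.d0SU,TU}; this is why the lemma carries a \qed\ with no separate proof.
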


Being a Boolean ring, $B$ has no nonzero nilpotent elements,
hence the ideal $\{0\}$ of $B$ trivially contains
every $x\in B$ such that $x^n\in\{0\}$ for some $n\geq 1;$
and being a singleton, $\{0\}$ is closed in the metric topology.
So $\sqrt{\{0\}}=\{0\}$ under the definition
of $\sqrt{I}$ suggested by Gleason.

Is $\{0\}$ an intersection of closed prime ideals?
A negative answer follows from

\begin{lemma}\label{L.no_closed_primes}
Every prime ideal $P$ of the ring $B$ has for topological closure
the whole ring $B.$

Hence $B$ has no closed prime ideals.
\end{lemma}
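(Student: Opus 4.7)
The plan is to exploit the fact that in a Boolean ring every pair of disjoint (in the set-theoretic sense) measurable sets gives rise to a product equal to zero, so primeness forces all but one element of a finite disjoint family to lie in $P.$ From this, any $[S]\in B$ can be approximated arbitrarily closely in $d$ by elements of $P,$ showing that the closure of $P$ is all of $B.$

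Concretely, given $[S]\in B$ and $\varepsilon>0,$ I would choose an integer $n$ with $\mu(S)/n<\varepsilon$ and partition $S$ into $n$ pairwise disjoint measurable pieces $S_1,\dots,S_n$ of equal measure $\mu(S)/n.$ Because the $S_i$ are pairwise disjoint in $[0,1],$ we have $[S_i][S_j]=[S_i\cap S_j]=0\in P$ for all $i\neq j.$ Since $P$ is prime, at most one index $i_0$ can have $[S_{i_0}]\notin P;$ every other $[S_i]$ lies in $P.$

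Now let $T$ be the union of those $S_i$ with $[S_i]\in P.$ Then $[T]=\sum_{i\neq i_0}[S_i]\in P,$ and in the Boolean ring the symmetric difference $S+T$ equals either $S_{i_0}$ (if an exceptional index exists) or $\emptyset$ (if all $[S_i]\in P$). In either case $d([S],[T])=\mu(S+T)\le\mu(S)/n<\varepsilon.$ So $[S]$ lies in the closure of $P,$ and since $[S]$ was arbitrary, $\overline{P}=B.$ The second sentence of the lemma follows immediately: a proper closed ideal cannot equal $B,$ so no prime ideal of $B$ is closed.

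The only step that requires any care is the observation that primeness applied to products $[S_i][S_j]=0$ forces $n-1$ of the $n$ pieces into $P$ simultaneously; once this is recognized, the rest is a routine measure-theoretic estimate. I anticipate no real obstacle beyond making sure the partition of $S$ (rather than of $[0,1]$) is used so that the ``exceptional'' piece $S_{i_0}$ has small measure.
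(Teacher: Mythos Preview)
Your argument is correct.  Both your proof and the paper's rest on the same underlying observation---disjoint measurable sets have product zero in $B,$ so primeness forces all but one member of a finite disjoint family into $P$---but the two proofs organize this differently.

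The paper works ``from the top'': starting from $0=[1+U]\in P$ with $U=[0,1],$ it repeatedly bisects $U$ into disjoint halves $S,T$ and uses $[1+S][1+T]=[1+U]\in P$ to halve $\mu(U),$ obtaining elements of $P$ arbitrarily close to $1;$ then it multiplies such elements by an arbitrary $[V]$ to get elements of $P$ close to $[V].$  You instead work directly on the target $[S]$: you partition $S$ itself into $n$ pieces of measure $\mu(S)/n,$ note that pairwise products vanish so at most one piece escapes $P,$ and sum the remaining pieces inside $P$ to land within $\mu(S)/n$ of $[S].$

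Your route is slightly more direct, since it avoids the two-stage ``approximate $1,$ then multiply'' maneuver.  The paper's route has the minor advantage of isolating the intermediate fact that $P$ accumulates at $1,$ and needs only bisection rather than an $n$-fold partition of a measurable set (though both are easy via the continuity of $t\mapsto\mu(S\cap[0,t])$).  The only small point worth making explicit in your write-up is the one-line justification that ``at most one $[S_{i_0}]\notin P$'': if two indices $i\neq j$ had $[S_i],[S_j]\notin P,$ then $[S_i][S_j]=0\in P$ would already contradict primeness.
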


\begin{proof}
Given a prime ideal $P,$ let us first show that $P$ has elements
arbitrarily close to $1,$ i.e., elements $[1+U]$ such that $U$
has arbitrarily small measure.
To do this, we shall show that whenever $[1+U]\in P,$
there exists $[1+S]\in P$ with $\mu(S)=\mu(U)/2.$

Indeed, let us write $U$ as the union of two disjoint measurable
subsets $S$ and $T,$ each of measure $\mu(U)/2.$
(E.g., one can take $S=U\cap [0,t]$ and
$T=U\cap (t,1]$
for appropriate $t\in[0,1].$
Such a $t$ exists by continuity of $\mu(U\cap[0,t])$ in $t.)$
Since $[1+S][1+T]=[1+U]\in P,$ one of $[1+S],\,[1+T]$
belongs to $P.$

So $P$ indeed has elements arbitrarily close to $1.$
Multiplying
arbitrary $[V]\in B$ by such elements, we see that $P$ has
elements arbitrarily close to $[V];$
so the closure of $P$ contains every $[V]\in B,$ as claimed.

Hence no prime ideal $P$ is itself closed, giving the final assertion.
\end{proof}

So $\{0\} = \sqrt{\{0\}}$ is not an intersection of
closed prime ideals.
Since $B$ is complete in the metric $\mu$
(to be proved in two ways in \S\ref{S.lattice}), $B$
answers the strongest form of Gleason's question.

(To be precise, Gleason's question
concerned completeness of a topological
ring $R$ in the {\em uniform structure} arising from
additive translates of neighborhoods of $0.$
Our metrized ring has additive-translation-invariant
metric by~\eqref{d.d0_ct}, so completeness in
the uniform structure so arising from the metric topology
is equivalent to completeness in the metric.)

\section{Non-Boolean algebras}\label{S.nonBool}

Is the behavior of above example limited to Boolean rings,
or perhaps to rings of finite characteristic?
No.
We note below how to generalize the construction of the preceding
section to algebras in the sense of General Algebra
(a.k.a.\ Universal Algebra),
and observe that when the algebras in question are rings
(of arbitrary characteristic), these give more varied examples
of the properties proved for $B.$

We start with the analog of $B_0.$

\begin{definition}\label{D.X'}
For $X$ a set, let $X^{[0,1]}$ denote the set
of all $\!X\!$-valued functions on the unit interval, and
for each $f\in X^{[0,1]}$ and $x\in X,$ let
\begin{equation}\begin{minipage}[c]{35pc}\label{d.f_x}
$f_x\ =\ \ \{t\in[0,1]\mid f(t)=x\}.$
\end{minipage}\end{equation}

Let $X'$ denote the subset of $X^{[0,1]}$ consisting
of those $f$ such that
\begin{equation}\begin{minipage}[c]{35pc}\label{d.f_x_meas}
for all $x\in X,$ the set $f_x\subseteq[0,1]$ is measurable,
\end{minipage}\end{equation}
and
\begin{equation}\begin{minipage}[c]{35pc}\label{d.X'_ctb}
the image of $f,$ that is,
$\{x\in X\mid f_x\neq\emptyset\},$ is countable \textup{(}i.e.,
finite or countably infinite\textup{)}.
\end{minipage}\end{equation}

For $f,g\in X',$ let
\begin{equation}\begin{minipage}[c]{35pc}\label{d.d'}
$d'(f,\,g)\ =\ \mu(\{t\in[0,1]\mid f(t)\neq g(t)\})\ =
\ (\sum_{x\in X}\,d_0(f_x,\,g_x))\,/\,2,$
\end{minipage}\end{equation}
where $d_0$ is the pseudometric on measurable subsets of $[0,1]$
defined in~\eqref{d.d0}.
\end{definition}

The final equality of~\eqref{d.d'} is, intuitively, a
consequence of the fact that every $t\in [0,1]$ such that
$f(t)\neq g(t)$ contributes twice to the summation in the final term:
via the summand with $x=f(t)$ and the summand with $x=g(t).$
That idea is easily formalized to show that
that sum is indeed twice the middle term of~\eqref{d.d'}.

\begin{lemma}\label{L.X'}
For any set $X,$ the function $d'$ defined
by~\eqref{d.d'} is a pseudometric on $X'.$

For any finitary operation $u: X^n\to X,$ the induced pointwise
operation $u^{[0,1]}: (X^{[0,1]})^n\to X^{[0,1]}$ carries $(X')^n$ to
$X',$ and for $f^{(0)},\dots,f^{(n-1)},\,g^{(0)},\dots,g^{(n-1)}\in X'$
we have
\begin{equation}\begin{minipage}[c]{35pc}\label{d.d'_and_u}
$d'(u^{[0,1]}(f^{(0)},\dots,f^{(n-1)}),\,
u^{[0,1]}(g^{(0)},\dots,g^{(n-1)}))\ \leq
\ d'(f^{(0)},\,g^{(0)}) + \dots + d'(f^{(n-1)},\,g^{(n-1)}).$
\end{minipage}\end{equation}
\end{lemma}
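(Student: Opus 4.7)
The plan is to verify the three parts of the lemma in sequence: the pseudometric axioms for $d'$, the fact that pointwise operations preserve $X'$, and the inequality~\eqref{d.d'_and_u}. The observation underlying everything is that for $f,g\in X'$ the set $A(f,g) = \{t\in[0,1]: f(t)\neq g(t)\}$ can be written as $\bigcup_{x\neq y}(f_x\cap g_y)$, where $(x,y)$ ranges over pairs with $x\in\mathrm{im}(f),\ y\in\mathrm{im}(g)$; by~\eqref{d.X'_ctb} this is a countable union of measurable sets, so $A(f,g)$ is measurable and the middle expression in~\eqref{d.d'} makes sense. The equality with the last expression in~\eqref{d.d'} holds because each $t\in A(f,g)$ contributes to exactly two of the summands $d_0(f_x,g_x)$, namely those indexed by $x=f(t)$ and $x=g(t)$.

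For the pseudometric axioms, non-negativity and symmetry are immediate, while $d'(f,f)=0$ holds because $A(f,f)=\emptyset$. The triangle inequality reduces to the set-theoretic inclusion $A(f,h)\subseteq A(f,g)\cup A(g,h)$ (if $f(t)\neq h(t)$ then $g(t)$ cannot equal both) combined with subadditivity of $\mu$.

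For preservation of $X'$ under a finitary operation $u:X^n\to X$, let $h=u^{[0,1]}(f^{(0)},\dots,f^{(n-1)})$. The image of $h$ is contained in $u(\mathrm{im}(f^{(0)})\times\dots\times\mathrm{im}(f^{(n-1)}))$, which is countable, giving~\eqref{d.X'_ctb}. For~\eqref{d.f_x_meas}, write
\[
h_x\ =\ \bigcup_{(x_0,\dots,x_{n-1}):\,u(x_0,\dots,x_{n-1})=x}\ f^{(0)}_{x_0}\cap\dots\cap f^{(n-1)}_{x_{n-1}};
\]
only tuples with each $x_i\in\mathrm{im}(f^{(i)})$ contribute nontrivially, so this is in effect a countable union of measurable sets. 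Finally, for~\eqref{d.d'_and_u}, write $h=u^{[0,1]}(f^{(0)},\dots,f^{(n-1)})$ and $h'=u^{[0,1]}(g^{(0)},\dots,g^{(n-1)})$; since $u$ is a function, $h(t)\neq h'(t)$ forces $f^{(i)}(t)\neq g^{(i)}(t)$ for some $i$, so $A(h,h')\subseteq\bigcup_i A(f^{(i)},g^{(i)})$, and subadditivity of $\mu$ yields the desired inequality.

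No step is genuinely difficult; the lemma is a routine verification. The only thing requiring attention is the consistent invocation of the countable-image hypothesis~\eqref{d.X'_ctb} to ensure that the unions appearing (in the description of $A(f,g)$, in the sum on the right of~\eqref{d.d'}, and in the formula for $h_x$) are all countable, so that measurability is preserved and the relevant sums make sense.
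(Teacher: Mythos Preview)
Your proof is correct and follows essentially the same route as the paper's: the same countable-union formula for $h_x$, the same countability argument for the image, and the same observation that $h$ and $h'$ can differ only where some $f^{(i)}$ and $g^{(i)}$ differ. The only cosmetic difference is that the paper verifies the triangle inequality via the rightmost expression in~\eqref{d.d'} (reducing to the triangle inequality for $d_0$), whereas you use the middle expression and the inclusion $A(f,h)\subseteq A(f,g)\cup A(g,h)$; both are immediate.
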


\begin{proof}
That $d'$ is a pseudometric is straightforward.
(The triangle inequality is verified using the
rightmost expression of~\eqref{d.d'}
and the fact that $d_0$ is a pseudometric.)

We need to know next that given $u: X^n\to X,$
and $f^{(0)},\dots,f^{(n-1)}\in X',$ we have
$u^{[0,1]}(f^{(0)},\dots,\linebreak[0]f^{(n-1)})\in X'.$
Note that for each $x\in X,$ $u^{[0,1]}(f^{(0)},\dots,f^{(n-1)})_x$
will be the union, over all $\!n\!$-tuples
$(x_0,\dots,x_{n-1})$ satisfying $u(x_0,\dots,x_{n-1})=x,$ of the sets
\begin{equation}\begin{minipage}[c]{35pc}\label{d.n-fold_cap}
$f^{(0)}_{x_0}\cap\dots\cap f^{(n-1)}_{x_{n-1}}.$
\end{minipage}\end{equation}
Now for each $i\in n,$ only countably many values
of $x_i$ make $f^{(i)}_{x_i}$ nonempty, so only countably
many $\!n\!$-tuples $(x_0,\dots,x_{n-1})$
make the intersection~\eqref{d.n-fold_cap} nonempty;
and by~\eqref{d.f_x_meas}, each of those
$\!n\!$-fold intersections is measurable;
so for each $x,$ $u^{[0,1]}(f^{(0)},\dots,f^{(n-1)})_x$ is a countable
union of measurable sets, hence measurable; i.e.,
$u^{[0,1]}(f^{(0)},\dots,f^{(n-1)})$ satisfies
the condition of~\eqref{d.f_x_meas}.
It also satisfies the condition of~\eqref{d.X'_ctb},
since the countably many cases where~\eqref{d.n-fold_cap}
is nonempty lead to only countably many possibilities for the element
$u(x_0,\dots,x_{(n-1)}).$
So $u^{[0,1]}$ carries $(X')^n$ to $X'.$

The inequality~\eqref{d.d'_and_u} follows from the fact that
$u(f^{(0)},\dots,f^{(n-1)})$ and
$u(g^{(0)},\dots,g^{(n-1)})$
can differ only at points $t\in[0,1]$ where $f^{(i)}$ and $g^{(i)}$
differ for at least one~$i.$
\end{proof}

We now want to deduce the corresponding
results with the set of functions $X'$
replaced by the set of equivalence classes of such functions under the
relation of differing on a set of measure zero.
We will need the following observation.

\begin{lemma}\label{L.parttn}
As in {\rm\S\ref{S.Bool}}, let $B_0$ denote the Boolean ring
of measurable subsets of $[0,1].$

Let $S_0,$ $S_1,\,\dots$ be a countable {\rm(}i.e., finite or countably
infinite{\rm)} family of elements of $B_0$ such that
\begin{equation}\begin{minipage}[c]{35pc}\label{d.muSiSj}
$\mu(S_i\cap S_j)\ =\ 0$ \ whenever $i\neq j,$
\end{minipage}\end{equation}
and
\begin{equation}\begin{minipage}[c]{35pc}\label{d.sum_muSi}
$\sum_i\,\mu(S_i)\ =\ 1.$
\end{minipage}\end{equation}

Then there exist $T_0,$ $T_1,\ldots\in B_0$ such that
\begin{equation}\begin{minipage}[c]{35pc}\label{d.muSi+Ti}
$d_0(S_i,\,T_i)\ =\ 0\quad (i=0,1,\dots),$
\end{minipage}\end{equation}
and the $T_i$ partition $[0,1],$ i.e., satisfy the two conditions
\begin{equation}\begin{minipage}[c]{35pc}\label{d.TiTj}
$T_i\cap T_j\ =\ \emptyset$ whenever $i\neq j$
{\rm(}cf.~\eqref{d.muSiSj}\rm{)}
\end{minipage}\end{equation}
and
\begin{equation}\begin{minipage}[c]{35pc}\label{d.cupTi}
$\bigcup_i\,T_i\ =\ [0,1]$
{\rm(}cf.~\eqref{d.sum_muSi}\rm{)}.
\end{minipage}\end{equation}
\end{lemma}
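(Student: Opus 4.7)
The plan is to construct the $T_i$ in two stages: first make them pairwise disjoint by greedily removing lower-indexed sets, then patch the (null) missing piece by dumping it into $T_0$.

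First I would define, for each index $i$,
\[
T_i'\ =\ S_i\setminus\bigcup_{j<i}S_j.
\]
Each $T_i'$ is measurable (a finite or countable Boolean combination of measurable sets), and the family $\{T_i'\}$ is pairwise disjoint by construction. Moreover $T_i'\subseteq S_i$ and $S_i\setminus T_i'\subseteq\bigcup_{j<i}(S_i\cap S_j)$, which by \eqref{d.muSiSj} is a countable union of null sets, hence null. Thus $d_0(S_i,T_i')=\mu(S_i+T_i')=0$ for every $i$.

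Next I would observe that $\mu(T_i')=\mu(S_i)$ by the above, so by countable additivity on the disjoint family $\{T_i'\}$ and the hypothesis \eqref{d.sum_muSi},
\[
\mu\!\Bigl(\bigcup_i T_i'\Bigr)\ =\ \sum_i\mu(T_i')\ =\ \sum_i\mu(S_i)\ =\ 1.
\]
Hence $N=[0,1]\setminus\bigcup_i T_i'$ is measurable of measure zero. To force equality $\bigcup_i T_i=[0,1]$ in \eqref{d.cupTi}, set $T_0=T_0'\cup N$ and $T_i=T_i'$ for $i\geq 1$. Disjointness \eqref{d.TiTj} is preserved because $N$ is disjoint from every $T_i'$ by definition, and $d_0(S_0,T_0)=0$ still holds since $T_0$ differs from $T_0'$ by the null set $N$ (and $d_0(S_0,T_0')=0$). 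If the family is finite, truncate the construction at the last index; the argument is unchanged.

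I do not foresee a real obstacle here: the only point that requires any care is to ensure each $T_i'$ is measurable despite being defined by a potentially countable Boolean operation, which is immediate since $B_0$ is a $\sigma$-algebra. The slight subtlety is that one cannot hope for the $T_i'$ themselves to exhaust $[0,1]$ on the nose, which is why the null remainder $N$ must be absorbed into one of the $T_i$; assigning it to $T_0$ is the simplest choice.
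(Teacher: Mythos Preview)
Your proposal is correct and follows essentially the same approach as the paper: define $T_i$ (for $i>0$) as $S_i$ with the lower-indexed $S_j$ removed, then absorb the measure-zero remainder into $T_0$. The only cosmetic difference is that the paper defines $T_0$ directly as $[0,1]\setminus\bigcup_{i>0}T_i$ and verifies $d_0(S_0,T_0)=0$ by a measure-subtraction argument, whereas you first set $T_0'=S_0$ and then adjoin the null set $N$; the resulting $T_0$ is literally the same set.
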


\begin{proof}
Let
\begin{equation}\begin{minipage}[c]{35pc}\label{d.Ti}
$T_i\ =\ S_i\,\setminus\,\bigcup_{0\leq j<i}\,S_j$ \ for \ $i>0,$
\end{minipage}\end{equation}
and
\begin{equation}\begin{minipage}[c]{35pc}\label{d.T0}
$T_0\ =\ [0,1]\,\setminus\,\bigcup_{i>0}\,T_i.$
\end{minipage}\end{equation}

These sets are clearly measurable and partition~$[0,1].$

Since the sets $S_j$ whose members are removed from $S_i$
in~\eqref{d.Ti} have, by~\eqref{d.muSiSj}, only a set
of measure zero in common with $S_i,$ we see that for
$i>0,$ $T_i$ differs from $S_i$ in a set of measure zero,
giving~\eqref{d.muSi+Ti} for such~$i.$
Also by~\eqref{d.Ti}, no $T_i$ with $i>0$ contains elements of $S_0,$
so by~\eqref{d.T0}, $T_0\supseteq S_0;$
hence to prove the $i=0$ case of~\eqref{d.muSi+Ti}, it
suffices to show that $\mu(T_0)=\mu(S_0).$
To do this, note that since the $T_i$ partition $[0,1],$ we have
$\sum_{i\geq 0}\,\mu(T_i) = \mu([0,1]) = 1 =
\sum_{i\geq 0}\,\mu(S_i)$ by~\eqref{d.sum_muSi}.
If we subtract from that
relation the equations $\mu(T_i)=\mu(S_i)$ for all $i>0,$
which follow from the cases of~\eqref{d.muSi+Ti} already obtained,
we get the desired $i=0$ case.
\end{proof}

Now -- still assuming the completeness of $B,$ to be obtained
in the next section -- we can get

\begin{proposition}\label{P.X^*}
For $X$ a set, let $X^*$ denote the quotient of $X'$
\textup{(}defined in Definition~\ref{D.X'}\textup{)} by
the equivalence relation $d'(f,\,g)=0,$ and let $d^*$ be the metric
on $X^*$ induced by~$d'.$

Then $X^*$ under the metric $d^*$ is a complete metric space.
\end{proposition}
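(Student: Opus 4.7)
The plan is to reduce completeness of $X^*$ to the assumed completeness of $B$, coordinate by coordinate, via the decomposition $2\,d'(f,g) = \sum_{x\in X} d_0(f_x,g_x)$.

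First, given a Cauchy sequence $\{[f^{(n)}]\}$ in $X^*,$ observe that for each fixed $x\in X$ the relation $d_0(f^{(n)}_x, f^{(m)}_x)\leq 2\,d'(f^{(n)},f^{(m)})$ makes $\{[f^{(n)}_x]\}$ a Cauchy sequence in $B.$ By completeness of $B,$ there is a unique $[S_x]\in B$ with $[f^{(n)}_x]\to [S_x].$ Next I would isolate a countable index set $X_\infty\subseteq X$ sufficient to describe the limit: each $f^{(n)}$ has countable image by~\eqref{d.X'_ctb}, so letting $X_n=\{x : f^{(n)}_x\neq\emptyset\}$ and $X_\infty=\bigcup_n X_n,$ every $x\notin X_\infty$ satisfies $\mu(f^{(n)}_x)=0$ for all $n,$ hence $[S_x]=0.$

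The next step is to verify that the $\{S_x\}_{x\in X_\infty}$ satisfy the hypotheses of Lemma~\ref{L.parttn}. Disjointness mod null sets, i.e.\ $\mu(S_x\cap S_y)=0$ for $x\neq y,$ follows by passing to the limit in $\mu(f^{(n)}_x\cap f^{(n)}_y)=0,$ using continuity of intersection under $d_0$ (which is~\eqref{d.d0SU,TU} applied in $B_0$). The condition $\sum_{x}\mu(S_x)=1$ takes slightly more care: from the Cauchy property and $|\mu(A)-\mu(A')|\leq d_0(A,A')$ one gets that the functions $x\mapsto\mu(f^{(n)}_x)$ form a Cauchy sequence in $\ell^1(X_\infty),$ each of norm $1,$ whose pointwise (hence $\ell^1$) limit is $x\mapsto\mu(S_x).$ Lemma~\ref{L.parttn} then supplies a genuine measurable partition $\{T_x\}_{x\in X_\infty}$ of $[0,1]$ with $d_0(S_x,T_x)=0.$ Define $f:[0,1]\to X$ by $f(t)=x$ for the unique $x\in X_\infty$ with $t\in T_x;$ then $f_x=T_x$ for $x\in X_\infty$ and $f_x=\emptyset$ otherwise, so $f\in X'$ by~\eqref{d.f_x_meas} and~\eqref{d.X'_ctb}, and $[f_x]=[S_x]$ in $B$ for every $x\in X.$

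Finally I would show $d'(f^{(n)},f)\to 0.$ The main issue here is the interchange of a sum (over $X_\infty$) with a limit. Given $\varepsilon>0,$ choose $N$ so that $\sum_{x} d_0(f^{(n)}_x,f^{(m)}_x)<2\varepsilon$ for all $n,m\geq N.$ For each fixed $x,$ $d_0(f^{(n)}_x,f^{(m)}_x)\to d_0(f^{(n)}_x,f_x)$ as $m\to\infty$ by the triangle inequality and $[f^{(m)}_x]\to[f_x].$ Fatou's lemma for counting measure on $X_\infty$ gives $\sum_{x} d_0(f^{(n)}_x,f_x)\leq 2\varepsilon,$ i.e.\ $d'(f^{(n)},f)\leq\varepsilon$ for $n\geq N,$ as required.

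The main obstacle is the middle step: the coordinatewise limits $[S_x]$ need not come from an actual function in $X',$ because the natural representatives may fail to partition $[0,1].$ This is precisely why Lemma~\ref{L.parttn} was set up, and verifying its hypothesis $\sum_x\mu(S_x)=1$ is the one place where the argument goes beyond a purely formal coordinatewise limit; the final Fatou-type step is then routine.
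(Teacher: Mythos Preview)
Your proof is correct and follows essentially the same route as the paper's: pass to coordinatewise limits $[S_x]$ in $B,$ verify the hypotheses of Lemma~\ref{L.parttn}, use the resulting partition to define $f\in X',$ and check $[f^{(n)}]\to[f].$ The only differences are cosmetic: where the paper uses explicit $\varepsilon/3$ bookkeeping for $\sum_x\mu(S_x)=1$ and leaves the final convergence $d'(f^{(n)},f)\to 0$ to the reader, you package these as $\ell^1$-completeness and Fatou for counting measure (note, incidentally, that $d_0(f_x,g_x)\leq d'(f,g)$ holds without the factor~$2,$ and that your parenthetical ``pointwise (hence $\ell^1$)'' has the implication the wrong way round---the correct logic is that the $\ell^1$ limit exists by completeness and must agree with the pointwise one).
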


\begin{proof}
Consider any Cauchy sequence
$[f^{(0)}],\,[f^{(1)}],\,\dots\in X^*,$ where
$f^{(0)},\,f^{(1)},\,\dots\in X'.$
For each $n,$ the set of elements $x\in X$ such that $f^{(n)}_x$ is
nonempty is countable by~\eqref{d.X'_ctb}; hence there exists a
countable (possibly finite) list of distinct such elements:
\begin{equation}\begin{minipage}[c]{35pc}\label{d.x0x1...}
$\{x_0,\,x_1,\dots\}\ =
\ \{x\in X\mid(\exists\,n)\,f^{(n)}_x\neq\emptyset\}.$
\end{minipage}\end{equation}

Now (writing $d_0$ and $d,$ as in the preceding section,
for our pseudometric on $B_0$ and metric on $B),$ we have
for all $i,\,m,\,n,$
\begin{equation}\begin{minipage}[c]{35pc}\label{d.df_vs_df_x}
$d([f^{(m)}_{x_i}],\,[f^{(n)}_{x_i}])\ =
\ d_0(f^{(m)}_{x_i},\,f^{(n)}_{x_i})\ \leq\ d'(f^{(m)},\,f^{(n)})
\ =\ d^*([f^{(m)}],\,[f^{(n)}]);$
\end{minipage}\end{equation}
hence the Cauchyness of the sequence of elements $[f^{(n)}]\in X^*$
implies, for each $x_i,$ the Cauchyness of the sequence of
$[f^{(n)}_{x_i}]\in B.$
Hence by the completeness of $B,$ for each $i$ the sequence
$[f^{(0)}_{x_i}],\,[f^{(1)}_{x_i}],\,\dots$
converges to an element which we shall write $[S_i],$
choosing an arbitrary representative $S_i\in B_0$
of the limit of that sequence in $B.$

I claim that these sets $S_i$
satisfy~\eqref{d.muSiSj} and~\eqref{d.sum_muSi}.
To get the first of these equations for given $i\neq j,$ note that
for any $\varepsilon>0,$ one can choose $n$ such that
$d_0(f^{(n)}_{x_i},\,S_i)<\varepsilon/2$
and $d_0(f^{(n)}_{x_j},\,S_j)<\varepsilon/2.$
Since $f^{(n)}_{x_i}$ and $f^{(n)}_{x_j}$
are disjoint, we see that $S_i$ and $S_j$
intersect in a set of measure at most $\varepsilon.$
Since this holds for all $\varepsilon>0,$ they
intersect in a set of measure $0.$

To get~\eqref{d.sum_muSi}, note that for any $\varepsilon>0$
we may choose $m$ such that for all
$n\geq m$ we have $d^*([f^{(m)}],\,[f^{(n)}])<\varepsilon/3,$
equivalently,
\begin{equation}\begin{minipage}[c]{35pc}\label{d.m}
$d'(f^{(m)},\,f^{(n)})\ <\ \varepsilon/3.$
\end{minipage}\end{equation}
Since $f_{x_0},\,f_{x_1},\,\dots$ partition $[0,1],$
we can also choose $j$ such that
\begin{equation}\begin{minipage}[c]{35pc}\label{d.j}
$\mu(f^{(m)}_{x_0})+\dots+\mu(f^{(m)}_{x_j})\ \geq\ 1-\varepsilon/3.$
\end{minipage}\end{equation}
For $n\geq m,$~\eqref{d.m} guarantees that
$d_0(f^{(m)}_{x_0},\,f^{(n)}_{x_0})+\dots+
d_0(f^{(m)}_{x_j},\,f^{(n)}_{x_j})<2\varepsilon/3$ (see~\eqref{d.d'}),
equivalently,
$d([f^{(m)}_{x_0}],\,[f^{(n)}_{x_0}])+\dots+
d([f^{(m)}_{x_j}],\,[f^{(n)}_{x_j}])<2\varepsilon/3,$
hence passing to the limit as $n\to\infty,$
$d([f^{(m)}_{x_0}],\,[S_0])+\dots+
d([f^{(m)}_{x_j}],\,[S_j])\leq 2\varepsilon/3;$
and combining with~\eqref{d.j} we get
$\mu(S_0)+\dots+\mu(S_j)\geq 1-\varepsilon,$
equivalently,
$\mu(S_0\cup\dots\cup S_j)\geq 1-\varepsilon,$
Since this holds for all $\varepsilon,$ we
have $\sum_i\mu(S_i)= \mu(\bigcup_i S_i)\geq 1;$
and since a subset of $[0,1]$ cannot have measure
larger than~$1,$ we get~\eqref{d.sum_muSi}.

Lemma~\ref{L.parttn} now gives us a partition of $[0,1]$
into sets $T_i$ which differ from the $S_i$ by sets of measure zero.
If we define $f\in X'$ by
\begin{equation}\begin{minipage}[c]{35pc}\label{d.g}
$f_{x_i}\ =\ T_i$ \ for all $i$ (whence by~\eqref{d.cupTi},
$f_x=\emptyset$ for all $x$ not of the form $x_i),$
\end{minipage}\end{equation}
then $[f]\in X^*$ is a limit of the given Cauchy
sequence $[f^{(0)}],\,[f^{(1)}],\,\dots\,,$ proving completeness.
\end{proof}

Remark:  If in~\eqref{d.X'_ctb} we had allowed uncountable
cardinalities, we would not have been able to use basic properties
of measure, e.g., in concluding that the set in the middle
term of~\eqref{d.d'} was measurable, and in
proving in Lemma~\ref{L.X'} that $u^{[0,1]}$ carries $(X')^n$ to $X'.$
On the other hand, if we had required the set of
$x$ making $f_x$ nonempty to be finite, our $X^*$ would not have been
complete, except in the case where $X$ was finite.
So countability is the only choice that gives our construction
$X^*$ the desired properties.

We have not yet called on~\eqref{d.d'_and_u}.
It implies that our construction
behaves nicely on algebras:

\begin{proposition}\label{P.alg}
Suppose $A$ is an algebra in the sense of General Algebra,
that is, a set given with a \textup{(}finite or infinite\textup{)}
family of operations, each of finite arity.

Then for each operation $u: A^n\to A$ of $A,$ the operation
$u^*$ of $A^*$ described by
\begin{equation}\begin{minipage}[c]{35pc}\label{d.u^*}
$u^*([f_0],\dots,[f_{n-1}])\ =\ [u^{[0,1]}(f_0,\dots,f_{n-1})]$
\end{minipage}\end{equation}
is well-defined,
and uniformly continuous in the metric $d^*;$ indeed, it satisfies
Lipschitz condition
\begin{equation}\begin{minipage}[c]{35pc}\label{d.d^*_and_u}
$d^*(u^*([f^{(0)}],\dots,[f^{(n-1)}]),\,
u^*([g^{(0)}],\dots,[g^{(n-1)}]))\\[.3em]
\hspace*{3em}\leq\ d^*([f^{(0)}],\,[g^{(0)}]) + \dots +
d^*([f^{(n-1)}],\,[g^{(n-1)}]).$
\end{minipage}\end{equation}

The algebra $A^*$ satisfies all identities satisfied by $A.$
In fact, every finite set of elements of $A^*$ is contained
in a subalgebra of $A^*$ isomorphic to a countable direct product
of copies of $A.$
\end{proposition}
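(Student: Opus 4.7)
The first three assertions of the proposition are quick: well-definedness of $u^*$ in~\eqref{d.u^*} and the Lipschitz inequality~\eqref{d.d^*_and_u} are just~\eqref{d.d'_and_u} from Lemma~\ref{L.X'} rewritten for equivalence classes (if $d'(f^{(i)}, g^{(i)}) = 0$ for all $i$, then~\eqref{d.d'_and_u} forces $u^{[0,1]}(f^{(0)}, \dots, f^{(n-1)})$ and $u^{[0,1]}(g^{(0)}, \dots, g^{(n-1)})$ to have $d'$-distance zero, so their classes in $A^*$ coincide). For the satisfaction of identities, note that if $p = q$ holds in $A$, then $p^{[0,1]}$ and $q^{[0,1]}$ agree \emph{pointwise everywhere} on $[0,1]$, hence their equivalence classes coincide in $A^*$ with no appeal to the metric needed.

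The substantive claim is the last one: every finite subset of $A^*$ lies in a subalgebra isomorphic to $A^{\mathbb{N}}$. Given $[f^{(1)}], \dots, [f^{(k)}] \in A^*$, I would form the common refinement by setting, for each $(y_1, \dots, y_k) \in A^k$, $S_{y_1, \dots, y_k} = f^{(1)}_{y_1} \cap \cdots \cap f^{(k)}_{y_k}$. By~\eqref{d.X'_ctb} and~\eqref{d.f_x_meas}, only countably many such tuples yield nonempty $S_{y_1, \dots, y_k}$, each such set is measurable, they are pairwise disjoint, and they cover $[0,1]$; so they already constitute a countable measurable partition of $[0,1]$. Discard the null blocks; if only finitely many blocks of positive measure remain, subdivide one of them into countably infinitely many positive-measure pieces, which is possible by the non-atomicity of Lebesgue measure on $[0,1]$ (for instance, by intersecting with a shrinking sequence of subintervals and using continuity of $\mu$). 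Relabel the resulting pieces as $T_0, T_1, \ldots$; each $f^{(j)}$ is then a.e.\ constant on each $T_m$.

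Finally, define $\Phi : A^{\mathbb{N}} \to A^*$ by sending $(a_m)_m$ to the class of the function $\varphi$ with $\varphi(t) = a_m$ for $t \in T_m$. Each such $\varphi$ lies in $A'$, since its level sets are unions of $T_m$'s (hence measurable) and its image is countable. Because the operations of $A^*$ act pointwise on representatives and each $T_m$ is a single cell of the partition, $\Phi$ is an algebra homomorphism; the positivity of every $\mu(T_m)$ makes $\Phi$ injective. Its image is therefore a subalgebra of $A^*$ isomorphic via $\Phi$ to $A^{\mathbb{N}}$, and by construction each $[f^{(j)}]$ lies in this image: take $a_m$ to be the a.e.-constant value of $f^{(j)}$ on $T_m$. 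The only mildly subtle point in the whole argument is guaranteeing a countably \emph{infinite} partition into positive-measure blocks -- so that we land in $A^{\mathbb{N}}$ rather than some finite power $A^n$ -- and the non-atomicity subdivision step above is exactly what is needed.
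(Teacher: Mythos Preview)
Your argument is correct and follows the same route as the paper's sketch: form the common refinement of the level-set partitions of the chosen representatives, drop the null blocks, and identify the subalgebra of functions constant on the remaining blocks with a direct power of $A$.

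Two small remarks on unnecessary extra work. First, in this paper ``countable'' means ``finite or countably infinite'' (see the parenthetical after~\eqref{d.X'_ctb}), so ``a countable direct product of copies of $A$'' already covers finite powers $A^n$; your non-atomicity subdivision step, while correct, is not needed. Second, the paper does not argue the identity claim separately: once every finite tuple sits inside a subalgebra isomorphic to a power of $A$, closure under identities is automatic, so your direct pointwise verification, though valid, is subsumed by the final assertion. (A pedantic point you glide over: after discarding null blocks the $T_m$ miss a null set, so $\varphi$ must be extended arbitrarily there to lie in $A'$; this is of course harmless for the class $[\varphi]$.)
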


\begin{proof}[Sketch of proof]
By the case of~\eqref{d.d'_and_u}
where the $d'(f^{(i)},g^{(i)})$ are all zero,
the operations $u^{[0,1]}$ of $A'$
respect the equivalence relation used in defining $A^*,$
so~\eqref{d.u^*} gives well-defined operations.
The general case of~\eqref{d.d'_and_u} then
gives the Lipschitz inequality~\eqref{d.d^*_and_u}, and in particular,
continuity.
Finally, given any finite family of elements
$[f^{(0)}],\dots,[f^{(N-1)}]$ of $A^*,$ the countably many
nonempty sets $f^{(i)}_a$ $(0\leq i<\nolinebreak N,$ $a\in A)$ yield
a decomposition of $[0,1]$ into countably many
intersections as in~\eqref{d.n-fold_cap}, on each of which all
of $f^{(0)},\dots,f^{(N-1)}$ are constant.
Dropping those intersections that have measure zero,
and looking at the algebra
of members of $A'$ that are constant on the remaining
countably many subsets, we see that this
contains $f^{(0)},\dots,f^{(N-1)},$ and has as its image
in $A^*$ a subalgebra isomorphic to a countable direct product
of copies of $A.$
\end{proof}

Finally, some observations specific to rings:

\begin{proposition}\label{P.ring}
Let $A$ be an associative unital ring.
Then in the complete metrized ring $A^*$ arising by the construction of
Proposition~\ref{P.alg}, the closure of every prime ideal is all
of $A^*;$ hence $A^*$ has no closed prime ideals.

On the other hand, if $A$ has no nonzero nilpotent elements, then
the topological radical $\sqrt{\{0\}},$ defined as
in~{\rm\S\ref{S.intro}}, is $\{0\}.$
\end{proposition}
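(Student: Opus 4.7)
The plan is to imitate the proof of Lemma~\ref{L.no_closed_primes}, with the Boolean-ring idempotents $[1{+}S]$ replaced by the natural ``partial-identity'' idempotents of $A^*$. For each measurable $T\subseteq[0,1]$, let $e_T\in A^*$ denote the class of the function equal to $1_A$ on $T$ and $0_A$ on $[0,1]\setminus T$ (where $1_A,0_A$ are the identity and zero of $A$). Because operations of $A^*$ are pointwise, $e_T$ is idempotent; disjoint $T_1,T_2$ give $e_{T_1}e_{T_2}=0$ (one factor is $0_A$ at every point); and $T_1\sqcup T_2=[0,1]$ gives $e_{T_1}+e_{T_2}=1$, the identity of $A^*$. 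Given any proper prime ideal $P$, the relation $e_{T_1}e_{T_2}=0\in P$ forces one of $e_{T_1},e_{T_2}$ into $P$ whenever $T_1\sqcup T_2=[0,1]$.

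Next I construct, by iterated bisection, an ascending sequence $S_1\subseteq S_2\subseteq\cdots$ of measurable subsets of $[0,1]$ with $\mu(S_n)\geq 1-2^{-n}$ and $e_{S_n}\in P$. Bisect $[0,1]$ into two equal halves and let $S_1$ be the half whose idempotent is in $P$. Given $S_n$, bisect its complement into two equal-measure halves, choose the one $V$ with $e_V\in P$, and set $S_{n+1}=S_n\cup V$; since $S_n,V$ are disjoint, $e_{S_{n+1}}=e_{S_n}+e_V\in P$ and $\mu(S_{n+1})=(1+\mu(S_n))/2$, so the inductive bound propagates. Consequently $d^*(e_{S_n},1)=\mu([0,1]\setminus S_n)\leq 2^{-n}$. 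For arbitrary $[f]\in A^*$, the product $[f]\,e_{S_n}$ lies in $P$, and applying the Lipschitz estimate~\eqref{d.d^*_and_u} to multiplication yields
\[
d^*([f]\,e_{S_n},\,[f])\ =\ d^*([f]\,e_{S_n},\,[f]\cdot 1)\ \leq\ d^*(e_{S_n},\,1)\ \leq\ 2^{-n},
\]
so $[f]\in\overline{P}$. Thus $\overline{P}=A^*$, and since $P$ was proper it is not closed; so $A^*$ has no closed prime ideals.

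For the second assertion, I observe that nilpotency in $A^*$ is pointwise: if $[f]^n=[0]$ then $f(t)^n=0$ in $A$ for almost every $t$, so by the hypothesis on $A$, $f(t)=0$ a.e., i.e., $[f]=[0]$. Hence the only element of $A^*$ with a power in $\{0\}$ is $0$ itself. The singleton $\{0\}$ is thus an ideal that is closed (being a singleton in a metric space) and contains every element with some power in it, so it itself meets the defining property of $\sqrt{\{0\}}$, giving $\sqrt{\{0\}}=\{0\}$.

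The only substantive point will be noticing the correct analog of the Boolean splitting step: here the role of ``half of $U$'' is played by a measurable bisection of the complement of $S_n$, and the role of the Boolean factorization $(1{+}S)(1{+}T)=1{+}U$ is played by $e_{V}e_{W}=0$ on disjoint supports, which lets primality of $P$ drive the induction. Everything else is mechanical once the Lipschitz estimate~\eqref{d.d^*_and_u} from Proposition~\ref{P.alg} is in hand.
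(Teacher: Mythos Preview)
Your argument is essentially the paper's own: both produce central idempotents $e_T=[1_T]$ supported on measurable sets, use primality to halve the measure of the ``missing'' part at each step, obtain elements of $P$ arbitrarily close to $1$, and then multiply to reach every element. The inductive bookkeeping differs cosmetically (you grow $S_n$ upward, the paper shrinks $U$ downward), but the content is identical, and your treatment of the nil-radical assertion matches the paper's one-line remark.

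One point needs patching. You write that ``$e_{T_1}e_{T_2}=0\in P$ forces one of $e_{T_1},e_{T_2}$ into $P$.'' For a not-necessarily-commutative ring this is the \emph{completely prime} condition, not the prime condition; primality of a two-sided ideal only gives $aA^*b\subseteq P\Rightarrow a\in P$ or $b\in P$. The fix is immediate and is exactly what the paper spells out: since $e_T$ takes values in $\{0_A,1_A\}$, it is central in $A^*$, so $e_{T_1}A^*e_{T_2}=e_{T_1}e_{T_2}A^*=0\subseteq P$, and now primality applies. The same remark covers your inductive step with $e_Ve_W=0$. Once you insert the word ``central'' and the one-line computation $e_{T_1}A^*e_{T_2}=0$, the proof is complete and coincides with the paper's.
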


\begin{proof}[Sketch of proof.]
Let $P$ be a prime ideal of $A^*.$
Writing $1_S$ for the characteristic function
with values in $\{0,1\}\subseteq A$ of a
subset $S\subseteq[0,1],$ let us show that
$P$ contains elements $[1_{[0,1]\setminus U}]$ for
sets $U$ of arbitrarily small positive measure.
Clearly, it contains $[1_{[0,1]\setminus U}]$ for $U=[0,1].$
Given any $U$ such that $[1_{[0,1]\setminus U}]\in P,$
let us partition $U$
into two measurable subsets $S$ and $T$ of equal measure.
Since $\!\{0,1\}\!$-valued functions are central in $A',$
so are their images in $A^*,$ so we have
\begin{equation}\begin{minipage}[c]{35pc}\label{d.charfns}
$[1_{[0,1]\setminus S}]\,A^*\,[1_{[0,1]\setminus T}]
\,=\,[1_{[0,1]\setminus S}]\,[1_{[0,1]\setminus T}]\,A^*
\,=\,[1_{[0,1]\setminus(S\cup T)}]\,A^*
\,=\,[1_{[0,1]\setminus U}]\,A^*\,\subseteq P,$
\end{minipage}\end{equation}
so as $P$ is prime, one of
$[1_{[0,1]\setminus S}],$ $[1_{[0,1]\setminus T}]$
belongs to $P;$ so we have cut in half the measure of
our set $U$ with $[1_{[0,1]\setminus U}]\in P.$
Since $d([1_{[0,1]\setminus U}],\,1)=\mu(U),$ we have,
as in the proof of Lemma~\ref{L.no_closed_primes},
found elements of $P$ arbitrarily close to $1,$ and
can deduce that the closure of $P$ is all of~$A^*.$

The final assertion is straightforward.
\end{proof}

Thus, for $A$ a commutative ring without nilpotents,
the rings $A^*$ generalize
the properties of the example $B$ of the preceding section.
That $B$ is, of course, the case of this construction with $A=\Z/2\Z.$

Remark: The development of the above results
in terms of measurable $\!X\!$-valued
functions on $[0,1],$ modulo
disagreement on sets of measure zero, feels artificial.
Surely one should be able to perform our constructions
abstractly in terms of the set $X,$ the Boolean ring $B,$
and the real-valued function on $B$ induced by the measure on $[0,1],$
and then generalize it to get such results with $B$ replaced
by any Boolean ring with an appropriate real-valued function.

If we were interested in maps $[0,1]\to X$ assuming
only {\em finitely many} values, then
the analog of $X^*$ could be described as the set of continuous
functions from the Stone space of $B$ to the discrete space~$X.$
But for maps allowed to assume countably many values,
the function corresponding to the metric
seems to be needed in defining $X^*.$
I leave the proper formulation and generalization
of that construction to experts in the subject.
Cf.~\cite[Chapter~31]{Fremlin}.

In contrast, the results of the next section will be
obtained in a satisfyingly general context.

\section{Completeness}\label{S.lattice}

When I first suggested the Boolean ring $B$ of measurable subsets of
$[0,1],$ modulo sets of measure zero, as an answer to J.\,Gleason's
question, the one property not clear to me was completeness
in the natural metric, though it seemed intuitively likely.

One might naively hope to prove completeness by
showing that every sequence of
measurable subsets of $[0,1]$ whose images in $B$
form a Cauchy sequence
``converges almost everywhere'' on $[0,1];$ i.e.,
that almost every $t\in[0,1]$ belongs either to all
but finitely many members of the sequence, or to only finitely many.
But this is not so; a counterexample
\cite[Exercise 22(6), p.94]{Halmos}
is the sequence whose first term
is $[0,1],$ whose next two are $[0,\,1/2]$ and $[1/2,\,1],$
whose next three are $[0,\,1/3],$ $[1/3,\,2/3],$ $[2/3,\,1],$
and, generally, whose $\!1{+}2{+}\ldots{+}(n{-}1){+}i\!$-th term for
$1\leq i\leq n$ is $[(i-1)/n,\,i/n].$
The measures of these sets approach zero, so the
sequence approaches $\emptyset$ in our metric; but clearly every
$t\in[0,1]$ occurs in infinitely many of these sets.
Looking at this example, one might still hope
that given a Cauchy sequence in $B,$ almost every
$t\in [0,1]$ has the property that the terms $S_i$
which contain $t$ are either ``eventually scarce'',
or have eventually scare complement.
But this, too, fails;
to see this, take the above example,
and ``stretch it out'' by repeating the $\!m\!$-th term $2^m$
times successively, for each~$m.$

However, an online search turned up a proof of the desired
completeness statement in a set of
exercises~\cite{Mennucci} (in particular point~6 on p.\,2).
I cited that in the first draft of this note as the only reference
for the result that I could find.
David Handelman then pointed out that the desired statement follows
immediately from the standard fact
that $L^1$ of the unit interval is complete in its natural
metric (\cite[Theorem VI.3.4, p.133]{Lang},
\cite[Theorem 22.E, p.93]{Halmos}), on identifying measurable sets
with their characteristic functions.
(And indeed, in \cite[Exercise~40(1), p.169]{Halmos},
the reader is asked to deduce the
result we want from that result about~$L^1.)$
Subsequently, Hannes Thiel
pointed me to a result of the desired sort
proved for a large class of Boolean rings
with measure-like $\![0,1]\!$-valued
functions \cite[Theorem 323G(c)]{Fremlin}.
(The condition there called {\em localizability}
means, roughly, that the Boolean ring has ``enough''
elements of finite measure, and has joins of arbitrary subsets.)

In all these sources, the key to the proof of
completeness is to pass from an arbitrary Cauchy sequence to
a subsequence with the property that the distance between the
$\!i\!$-th and $\!i{+}1\!$-st terms is $\leq 2^{-i}.$
Rather magically, a sequence with this property does indeed converge
almost everywhere, giving a limit of the original Cauchy sequence.

In fact, this trick can be abstracted from the context
of measure theory to that of lattices (or even semilattices)
as in the next theorem, from which we will recover, as a corollary,
the result on measurable sets modulo null sets.

Since we no longer need the notation ``$f_x$'' of the
preceding section for the point-set at which a function
takes on the value $x,$ we will henceforth use subscripts in the
conventional way to index terms of sequences.

We remark that the condition that a metrized lattice
be complete as a metric space, obtained in the theorem,
is independent of its completeness
as a lattice, i.e., the existence of least upper bounds and greatest
lower bounds of not necessarily finite subsets (though the
condition that {\em certain} infinite least upper bounds and greatest
lower bounds exist will be key to the argument).
For instance, any lattice, given with the metric that
makes $d(x,y)=1$ whenever $x\neq y,$ is complete as a metric
space, and, indeed, satisfies the hypotheses
of the next theorem, but need not be complete as a lattice.
Inversely, the totally ordered
subset of the real numbers $\{-2\}\cup(-1,1)\cup\{2\}$ is complete
as a lattice, but not as a metric space.

\begin{theorem}\label{T.lat}
Let $L$ be a lattice, whose underlying set is given with
a metric $d$ which satisfies identically at least one of the
inequalities
\begin{equation}\begin{minipage}[c]{35pc}\label{d.dv}
$d(x\vee y,\,x\vee z)\ \leq\ d(y,\,z)$ $(x,y,z\in L),$
\end{minipage}\end{equation}
\begin{equation}\begin{minipage}[c]{35pc}\label{d.dw}
$d(x\wedge y,\,x\wedge z)\ \leq\ d(y,\,z)$ $(x,y,z\in L)$
\end{minipage}\end{equation}
\textup{(}or, more generally, let $L$ be an upper semilattice
satisfying~\eqref{d.dv}, or a lower semilattice
satisfying~\eqref{d.dw}\textup{)}.

Suppose moreover that in $L$
\begin{equation}\begin{minipage}[c]{35pc}\label{d.incrCauchy}
every increasing Cauchy sequence
$x_0\leq x_1\leq\ldots\leq x_n\leq\dots$ converges,
\end{minipage}\end{equation}
and likewise
\begin{equation}\begin{minipage}[c]{35pc}\label{d.decrCauchy}
every decreasing Cauchy sequence
$x_0\geq x_1\geq\ldots\geq x_n\geq\dots$ converges.
\end{minipage}\end{equation}

Then every Cauchy sequence in $L$ converges; i.e.,
$L$ is complete as a metric space.
\end{theorem}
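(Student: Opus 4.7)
The plan is to follow the ``magic trick'' mentioned in the paragraph preceding the theorem: from an arbitrary Cauchy sequence $(x_n)$ in $L,$ extract a subsequence $(y_n)$ satisfying $d(y_n,y_{n+1})\leq 2^{-n}$ (a standard extraction), and show it converges; since the original sequence is Cauchy, it will then converge to the same limit. Assume without loss of generality that $L$ is an upper semilattice satisfying \eqref{d.dv} (the lower-semilattice case under \eqref{d.dw} is handled by the mirror argument with meets). Observe first that \eqref{d.dv}, combined with the triangle inequality, yields joint continuity of $\vee$ (in fact $\vee$ is $1$-Lipschitz in each variable).

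The heart of the argument is a two-stage monotonization. For each $n,$ form the partial joins
\[
a_{n,k}\ =\ y_n\vee y_{n+1}\vee\cdots\vee y_{n+k},
\]
which are increasing in $k.$ Applying \eqref{d.dv} with the common term $a_{n,k}$ on the left (noting $a_{n,k}=a_{n,k}\vee y_{n+k}$) gives
\[
d(a_{n,k+1},\,a_{n,k})\ =\ d(a_{n,k}\vee y_{n+k+1},\,a_{n,k}\vee y_{n+k})\ \leq\ d(y_{n+k+1},\,y_{n+k})\ \leq\ 2^{-(n+k)},
\]
so $(a_{n,k})_k$ is Cauchy, and by \eqref{d.incrCauchy} it converges to some $a_n\in L.$ Telescoping the same bound, and passing to the limit in $k,$ yields $d(y_n,a_n)\leq 2^{-n+1}.$

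Next, for each fixed $k$ the join $a_{n+1,k}$ is obtained from $a_{n,k+1}$ by omitting the term $y_n,$ so $a_{n+1,k}\leq a_{n,k+1}.$ Taking $k\to\infty$ and invoking the joint continuity of $\vee$ (the relation $u\leq v$ is the closed condition $u\vee v=v$) gives $a_{n+1}\leq a_n,$ so $(a_n)_n$ is a decreasing sequence. Because $d(y_n,a_n)\to 0$ and $(y_n)$ is Cauchy, so is $(a_n);$ by \eqref{d.decrCauchy} it converges to some $a\in L.$ The triangle inequality $d(y_n,a)\leq d(y_n,a_n)+d(a_n,a)$ then shows $y_n\to a,$ and hence $x_n\to a,$ completing the proof.

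The step I expect to require the most care is the passage from $a_{n+1,k}\leq a_{n,k+1}$ to $a_{n+1}\leq a_n$: it uses that the partial order is closed in the metric topology, which must be deduced from the continuity of $\vee$ supplied by \eqref{d.dv}. Everything else is geometric-series bookkeeping and telescoping triangle inequalities.
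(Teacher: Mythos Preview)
Your proof is correct and follows essentially the same route as the paper's: reduce by duality to the upper-semilattice case with~\eqref{d.dv}, pass to a fast subsequence, form the increasing partial joins $a_{n,k}=y_n\vee\cdots\vee y_{n+k},$ apply~\eqref{d.incrCauchy} to get limits $a_n,$ show these are decreasing and Cauchy, and apply~\eqref{d.decrCauchy}. The only cosmetic difference is that the paper bounds $d(a_n,a_m)$ directly by telescoping, whereas you deduce Cauchyness of $(a_n)$ from $d(y_n,a_n)\to 0$ together with Cauchyness of $(y_n);$ and your handling of the limit $a_{n+1}\le a_n$ via closedness of $\{(u,v):u\vee v=v\}$ is exactly what the paper does when it rewrites the order relation as $d(x_{h,j},\,x_{h,j}\vee x_{i,j})=0$ and passes to the limit.
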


\begin{proof}
It suffices to prove the case where $L$ is an upper
semilattice satisfying~\eqref{d.dv}, since this includes
the case where $L$ is a lattice satisfying~\eqref{d.dv},
while the cases where $L$ is a lower semilattice or lattice
satisfying~\eqref{d.dw} follow by duality.
So assume $L$ such an upper semilattice.

In proving $L$ complete, it suffices to show
convergence of sequences $x_0, x_1,\dots$ such that
\begin{equation}\begin{minipage}[c]{35pc}\label{d.sum_d()}
$\sum_{i\geq 0}\,d(x_i,\,x_{i+1})\ <\ \infty,$
\end{minipage}\end{equation}
since every Cauchy sequence has such a subsequence (e.g., one chosen
so that $d(x_i,\,x_{i+1})\leq 2^{-i},$
as in \cite{Halmos}, \cite{Lang} and \cite{Mennucci}),
and if a subsequence of a Cauchy sequence converges, so does
the whole sequence.

So let the sequence $x_0, x_1,\dots$ satisfy~\eqref{d.sum_d()},
and let us define
\begin{equation}\begin{minipage}[c]{35pc}\label{d.xij}
$x_{h,j}\ =\ x_h\vee x_{h+1}\vee\dots\vee x_j$ \ for $h\leq j.$
\end{minipage}\end{equation}

Note that if in~\eqref{d.dv} we put $x=x_{h,j},$ $y=x_j,$ $z=x_{j+1},$
we get
\begin{equation}\begin{minipage}[c]{35pc}\label{d.dxhjxhj+1}
$d(x_{h,j},\,x_{h,j+1})\ \leq\ d(x_j,\,x_{j+1}).$
\end{minipage}\end{equation}

Also, for $h\leq j\leq k,$ the triangle inequality (applied
$k-j-1$ times) gives $d(x_{h,j},\,x_{h,k})\leq
\sum_{j\leq\ell<k}\,d(x_{h,\ell},\,x_{h,\ell+1}).$
Applying~\eqref{d.dxhjxhj+1} to each term of this summation, we get
\begin{equation}\begin{minipage}[c]{35pc}\label{d.dxhjxhk}
$d(x_{h,j},\,x_{h,k})\ \leq
\ \sum_{j\leq\ell<k}\,d(x_{\ell},\,x_{\ell+1}).$
\end{minipage}\end{equation}

In particular, for each $j,$ the distance from $x_{h,j}$ to any of the
later terms $x_{h,k}$ is
$\leq\sum_{j\leq\ell<\infty}\,d(x_\ell,\,x_{\ell+1}).$
As $j\to\infty,$ this sum approaches $0,$ so (still
for fixed $h)$ the elements $x_{h,j}$
$(j=h,\,h{+}1,\dots)$ form an increasing Cauchy sequence.
By~\eqref{d.incrCauchy} this sequence will converge; let
\begin{equation}\begin{minipage}[c]{35pc}\label{d.xhoo}
$x_{h,\infty}\ =\ \lim_{j\to\infty}\,x_{h,j}.$
\end{minipage}\end{equation}

Note that, by~\eqref{d.dxhjxhk},~\eqref{d.xhoo},
and the continuity of $d$ in the topology it defines, we have
\begin{equation}\begin{minipage}[c]{35pc}\label{d.xhjxhoo}
$d(x_{h,j},\,x_{h,\infty})\ \leq
\ \sum_{j\leq\ell<\infty}\,d(x_{\ell},\,x_{\ell+1})$ \ for $h\leq j.$
\end{minipage}\end{equation}

Note next that
\begin{equation}\begin{minipage}[c]{35pc}\label{d.xhjxij}
$x_{h,j}\ \geq\ x_{i,j}$ \ for $h\leq i\leq j.$
\end{minipage}\end{equation}
I claim that this implies that
\begin{equation}\begin{minipage}[c]{35pc}\label{d.xhooxioo}
$x_{h,\infty}\ \geq\ x_{i,\infty}$ \ for $h\leq i.$
\end{minipage}\end{equation}
Indeed,~\eqref{d.xhjxij} and~\eqref{d.xhooxioo} are respectively
equivalent to the conditions $d(x_{h,j},\,x_{h,j}{\vee}\,x_{i,j})=0$ and
$d(x_{h,\infty},\,x_{h,\infty}{\vee}\nolinebreak\, x_{i,\infty})=0,$
and the latter can be obtained from the former using~\eqref{d.xhoo}.

So the elements $x_{h,\infty}$ $(h=0,1,\dots)$
form a decreasing sequence.
I claim that this sequence, too, is Cauchy; in fact, that
\begin{equation}\begin{minipage}[c]{35pc}\label{d.dxhooxioo}
$d(x_{h,\infty},\,x_{i,\infty})\ \leq
\ \sum_{h\leq\ell<i}\,d(x_\ell,\,x_{\ell+1})$ \ for $h\leq i.$
\end{minipage}\end{equation}
Namely, by essentially the same
argument used to prove~\eqref{d.dxhjxhk},
one sees that for every $j\geq i,$
$d(x_{h,j},\,x_{i,j})\leq \sum_{h\leq\ell<i}\,d(x_\ell,\,x_{\ell+1});$
and by continuity of $d,$ this again carries over to the limit as
$j\to\infty.$
Hence by~\eqref{d.decrCauchy}, the terms
of~\eqref{d.xhooxioo} converge, and we can define
\begin{equation}\begin{minipage}[c]{35pc}\label{d.xoo,oo}
$x_{\infty,\infty}\ =\ \lim_{h\to\infty}\,x_{h,\infty}.$
\end{minipage}\end{equation}

Finally, note that for every $h\geq 0,$
\begin{equation}\begin{minipage}[c]{35pc}\label{d.dxhx00,00}
$d(x_h,\,x_{\infty,\infty})\ =\ d(x_{h,h},\,x_{\infty,\infty})\\[.3em]
\hspace*{3em}\leq\ d(x_{h,h},\,x_{h,\infty})+
d(x_{h,\infty},\,x_{\infty,\infty})\\[.3em]
\hspace*{3em}\leq\ \sum_{h\leq\ell<\infty}\,d(x_\ell,\,x_{\ell+1})\,+
\,\sum_{h\leq\ell<\infty}\,d(x_\ell,\,x_{\ell+1})\\[.3em]
\hspace*{3em}=\ 2\,\sum_{h\leq\ell<\infty}\,d(x_\ell,\,x_{\ell+1}),$
\end{minipage}\end{equation}
and that this sum approaches $0$ as $h\to\infty.$
Hence the $x_h$ converge,
\begin{equation}\begin{minipage}[c]{35pc}\label{d.xh>xoo}
$\lim_{h\to\infty}\,x_h\ =\ x_{\infty,\infty},$
\end{minipage}\end{equation}
completing the proof of the theorem.
\end{proof}

The first assertion of the following corollary clearly includes
the completeness result called on in~\S\S\ref{S.Bool}-\ref{S.nonBool}.
The remaining two assertions are further generalizations.

\begin{corollary}\label{C.meas}
Let $M$ be a measure space of finite total measure, and $B$
the Boolean ring of measurable subsets of $M$
modulo sets of measure zero.
For $S$ a measurable subset of $M,$ let $[S]$ denote its image in~$B.$
Then $B$ is complete with respect to the metric
\begin{equation}\begin{minipage}[c]{35pc}\label{d.d_on_M}
$d([S],\,[T])\ =\ \mu(S+T),$
\end{minipage}\end{equation}

More generally, if $M$ is a measure space not necessarily of
finite total measure, and $C$ a positive real constant, and we define
\begin{equation}\begin{minipage}[c]{35pc}\label{d.d_C}
$d_C([S],\,[T])\ =\ \r{min}(\mu(S+T),\,C),$
\end{minipage}\end{equation}
then the Boolean ring $B$ is complete with respect to $d_C.$

Alternatively, if, in the latter
situation, we define $B_\r{fin}$ to be the {\em nonunital}
Boolean ring of measurable sets of {\em finite} measure
modulo sets of measure zero, then
$B_\r{fin}$ is again complete with respect to the metric
$d$ of~\eqref{d.d_on_M}.

In all these cases, the operations of our Boolean ring are continuous
in the metric named.
\end{corollary}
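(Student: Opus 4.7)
The plan is to reduce all three completeness statements to Theorem~\ref{T.lat}, and to deduce continuity of the ring operations from the Lipschitz inequalities $d([S]+[U],[T]+[U])=d([S],[T])$ and $d([S][U],[T][U])\le d([S],[T])$ established in \S\ref{S.Bool} (which pass through the truncation $\min(\cdot,C)$ without change).

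I first check~\eqref{d.dv} with $\vee$ given by $\cup$: pointwise, $(S\cup T)+(S\cup U)\subseteq T+U$, because any point of the left-hand side is outside $S$ and lies in exactly one of $T,U$; applying $\mu$ (resp.\ $\min(\mu(\cdot),C)$) yields the required inequality in $B$, in $B_\r{fin}$, and for $d_C$. Dually one has~\eqref{d.dw} with $\wedge$ given by $\cap$, which one may also invoke.

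Next I verify~\eqref{d.incrCauchy}. Given an increasing Cauchy sequence $[S_0]\le[S_1]\le\dots$, choose literal representatives with $S_0\subseteq S_1\subseteq\dots$ (replacing each $S_i$ by $S_0\cup\dots\cup S_i$ alters it only on a null set, since $[S_j]\le[S_i]$ forces $\mu(S_j\setminus S_i)=0$), and set $S_\infty=\bigcup_i S_i$. Continuity of $\mu$ from below gives $\mu(S_\infty\setminus S_i)=\lim_j\mu(S_j\setminus S_i)$; in the finite-total-measure case this shows $d([S_i],[S_\infty])\to0$. In $B_\r{fin}$, Cauchyness forces the monotone real sequence $\mu(S_i)=\mu(S_0)+d([S_i],[S_0])$ to be bounded, so $\mu(S_\infty)<\infty$ and the same argument applies. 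In the $d_C$ case, pick $\epsilon<C$ and $N$ with $d_C([S_i],[S_j])<\epsilon$ for $i,j\ge N$; then $\mu(S_j\setminus S_N)<\epsilon$ for all $j\ge N$, and passing to the union gives $\mu(S_\infty\setminus S_N)\le\epsilon<C$, hence $d_C([S_N],[S_\infty])\le\epsilon$. For~\eqref{d.decrCauchy} I argue dually with $S_\infty=\bigcap_i S_i$: continuity of $\mu$ from above in the finite-measure and $B_\r{fin}$ cases (where some $\mu(S_i)<\infty$), and in the $d_C$ case the analogous decomposition $S_N\setminus S_\infty=\bigcup_{j\ge N}(S_N\setminus S_j)$ as an increasing union of sets of measure $<\epsilon$.

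Theorem~\ref{T.lat} then delivers completeness in all three cases, and the opening Lipschitz inequalities yield continuity of $+$ and $\cdot$; the only remaining verification is that $d_C$ is genuinely a metric, which reduces to the elementary inequality $\min(a+b,C)\le\min(a,C)+\min(b,C)$ for $a,b\ge0$. I expect the main care to lie in the $d_C$ argument, where the ambient measure is infinite; the device of working below the threshold $\epsilon<C$ confines the analysis to a finite-measure region and removes the obstacle cleanly.
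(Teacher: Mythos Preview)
Your proposal is correct and follows the same route as the paper: verify that the lattice under each of the three metrics satisfies~\eqref{d.dv} (and~\eqref{d.dw}) together with~\eqref{d.incrCauchy} and~\eqref{d.decrCauchy}, then invoke Theorem~\ref{T.lat}. The paper's proof is in fact just a one-sentence sketch declaring these verifications ``easily shown'' and noting, for $d_C$, that once a Cauchy sequence has pairwise distances below $C$ the truncation falls away; your argument unpacks exactly these points (with the explicit inclusion $(S\cup T)+(S\cup U)\subseteq T+U$, the passage to nested representatives, and the $\epsilon<C$ device), so the approaches coincide.
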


\begin{proof}
In each of these cases, the lattice operations on
our structure are easily shown to
satisfy~\eqref{d.dv}-\eqref{d.decrCauchy} with respect
to the indicated metric.
(In the case of the metric $d_C$ of~\eqref{d.d_C},
note that in any Cauchy sequence, all but finitely many terms must
have the property that their distances from later terms
are all $\leq C,$ so that the definition~\eqref{d.d_C}, applied
to those distances, reduces to~\eqref{d.d_on_M}.)
Hence Theorem~\ref{T.lat} gives completeness.
\end{proof}

The standard result mentioned
earlier, that $L^1$ of the unit interval (indeed, of any
measure space) is complete in its natural metric,
follows similarly, on
regarding $L^1$ as a lattice under pointwise max and min.

We remark that in any partially ordered
set with a metric, the conjunction of conditions~\eqref{d.incrCauchy}
and~\eqref{d.decrCauchy} above is easily shown equivalent to the
single condition that every Cauchy sequence whose members form
a chain under the partial ordering converges.
But the pair of conditions as stated seems easier to work with.
In particular, it is easy to see that it holds for measurable
sets modulo sets of measure zero in a measure space.

Concerning conditions~\eqref{d.dv} and~\eqref{d.dw},
note that these are equivalent to Lipschitz continuity of $\vee,$
respectively $\wedge,$ with Lipschitz constant~$1.$
One could generalize the proof of Theorem~\ref{T.lat}
to allow any Lipschitz constant;
in fact, I suspect that a version of the theorem
could be proved -- at the cost of more complicated arguments --
with these conditions weakened to say
that $\vee,$ respectively, $\wedge,$ is uniformly continuous;
equivalently, that there exists a function $u$ from the positive reals
to the positive reals satisfying
\begin{equation}\begin{minipage}[c]{35pc}\label{d.dlimu}
$\lim_{t\to 0}\,u(t)\ =\ 0$
\end{minipage}\end{equation}
such that
\begin{equation}\begin{minipage}[c]{35pc}\label{d.dvu}
$d(x\vee y,\,x\vee z)\ \leq\ u(d(y,z))$ $(x,y,z\in L),$
\end{minipage}\end{equation}
respectively,
\begin{equation}\begin{minipage}[c]{35pc}\label{d.dwu}
$d(x\wedge y,\,x\wedge z)\ \leq\ u(d(y,z))$ $(x,y,z\in L).$
\end{minipage}\end{equation}

The idea would be to choose from a general Cauchy sequence a
subsequence for which the distance between
$\!i\!$-th and $\!i{+}1\!$-st terms decreases rapidly enough
not only to make these distances
have a convergent sum, but to have the corresponding
property after taking into account the effect of the
$u$ in~\eqref{d.dvu} or~\eqref{d.dwu} under the iterated application
of that inequality in the proof.
But I don't know whether there are situations where metrics
arise that would make it worth trying to prove such a result.

\section{Counterexample: a natural lattice under a strange metric}\label{S.ceg}

Another pair of conditions weaker than~\eqref{d.dv} and~\eqref{d.dw},
which I at one point thought might be able to replace those two
hypotheses in Theorem~\ref{T.lat}, are
\begin{equation}\begin{minipage}[c]{35pc}\label{d.dxxvy}
$d(x,\,x\vee y)\ \leq\ d(x,y)$ $(x,y\in L),$
\end{minipage}\end{equation}
and
\begin{equation}\begin{minipage}[c]{35pc}\label{d.dxxwy}
$d(x,\,x\wedge y)\ \leq\ d(x,y)$ $(x,y\in L).$
\end{minipage}\end{equation}

One can in fact prove from~\eqref{d.dxxvy} that
\begin{equation}\begin{minipage}[c]{35pc}\label{d.dx0xdots}
$d(x_0,\,x_0\vee\dots\vee x_i)\ \leq
\ \sum_{0\leq\ell<i}\,d(x_\ell,\,x_{\ell+1})$\quad
$(x_0,\dots,x_i\in L).$
\end{minipage}\end{equation}
Indeed,~\eqref{d.dxxvy} gives (if $i>0)$
$d(x_0,\,x_0\vee x_1\vee\dots\vee x_i)\leq
d(x_0,\,x_1\vee\dots\vee x_i);$
by the triangle inequality, the right-hand side
is $\leq d(x_0,\,x_1)+d(x_1,\,x_1\vee\dots\vee x_i).$
The second of these terms is (if $i>1)$ similarly
bounded by $d(x_1,\,x_2)+d(x_2,\,x_2\vee\dots\vee x_i),$
and this procedure, iterated, gives~\eqref{d.dx0xdots}.
But one cannot similarly get
\begin{equation}\begin{minipage}[c]{35pc}\label{d.0i0j}
$d(x_0\vee\dots\vee x_i,\ x_0\vee\dots\vee x_j)\ \leq
\ \sum_{i\leq\ell<j}\,d(x_\ell,\,x_{\ell+1})$\quad
$(0\leq i\leq j,\ x_0,\dots,x_j\in L)$
\end{minipage}\end{equation}
as would be needed to carry out the argument used in the proof of
Theorem~\ref{T.lat}.

I give below examples showing that
that theorem in fact does not hold with~\eqref{d.dxxvy}
and~\eqref{d.dxxwy} in place of~\eqref{d.dv} and~\eqref{d.dw}.
We will first get an example for upper semilattices and~\eqref{d.dxxvy},
then note how to modify it to make the semilattice into a lattice.
Applying duality, one gets examples for
the remaining two cases of the theorem.

To start the construction,
let $M$ be any metric space, with metric $d_M,$ and for any
finite nonempty subset $S$ of $M,$ define its diameter,
\begin{equation}\begin{minipage}[c]{35pc}\label{d.diam}
$\r{diam}(S)\ =\ \max_{x,y\in S}\,(d_M(x,y)).$
\end{minipage}\end{equation}

Now let $L$ be the upper semilattice of all finite nonempty subsets
of $M,$ under the operation of union; and for $S, T\in L$ define
\begin{equation}\begin{minipage}[c]{35pc}\label{d.d_L}
$d_L(S,\,T)\ =\ \left\{ \begin{array}{cl}
0 & \mbox{if}\ S=T,\\
\r{diam}(S\cup T) & \mbox{if}\ S\neq T.
\end{array}\right.$
\end{minipage}\end{equation}

It is straightforward that $d_L$ is a metric on $L;$
the only step requiring thought is the triangle inequality
$d_L(S,U)\leq d_L(S,T)+d_L(T,U)$ in the case where
the three sets $S,$ $T$ and $U$ are distinct and the maximum
defining the left-hand side of the
desired inequality is given by distance between
some $x\in S$ and some $y\in U.$
In that case, taking {\em any} $z\in T,$ one sees that
$d_L(S,U) = d_M(x,y) \leq d_M(x,z)+d_M(z,y)\leq d_L(S,T)+d_L(T,U),$
as required.

Under this metric,~\eqref{d.dxxvy} is also immediate:
writing that relation as $d(S,\,S\cup T)\leq d(S,T),$
we see that unless $T\subseteq S,$ the two sides both equal
$\r{diam}(S\cup T),$
while if $T\subseteq S,$ the left-hand side is zero.

I claim next that $L$ has no infinite strictly increasing
Cauchy sequences.
Indeed, given $S_0\subsetneqq S_1\subsetneqq \dots\in L,$ the
set $S_1$ must have more than one element, hence have
nonzero diameter; and from~\eqref{d.d_L} we see that
for every $i\geq 1,$ $d_L(S_i,S_{i+1})\geq \r{diam}(S_1),$ so the
distances between successive terms of the sequence do not
approach~$0.$
$L$ also has no infinite strictly decreasing Cauchy sequences,
since any strictly decreasing sequence
of sets starting with a finite set is finite.
So, trivially,~\eqref{d.incrCauchy} and~\eqref{d.decrCauchy} hold.

Note also that every non-singleton $S\in L$ has distance
at least $\r{diam}(S)$ from every other element of $L,$ so
it is an isolated point.
It follows that the set of non-singleton elements of $L$ is open
in $L,$ so the set of singleton elements is a closed set, which
is easily seen to be isometric to $M:$ $d_L(\{x\},\{y\})=d_M(x,y).$

Hence if we take for $M$ a non-complete metric space, then
a non-convergent Cauchy sequence in $M$ yields
a non-convergent Cauchy sequence in $L.$
So the semilattice $L$ is non-complete, despite
satisfying~\eqref{d.dxxvy},~\eqref{d.incrCauchy}
and~\eqref{d.decrCauchy}.

To get an example which is a lattice,
we pass from $L$ as above to~$L'=L\cup\{\emptyset\},$
which is clearly a lattice under union and intersection.
The only problem is how to extend the metric $d_L$ to $L'.$
We may in fact use {\em any} extension of this
metric to that set that does not sabotage
the non-completeness of~$L;$ for~\eqref{d.dxxvy} holds automatically
when $x$ and $y$ are comparable, and $\emptyset$ is
comparable to every element of $L'.$
So, for instance, we might choose a fixed $P\in L,$ and define
\begin{equation}\begin{minipage}[c]{35pc}\label{d.dempty}
$d_{L'}(\emptyset,S)\ =\ 1+d_L(P,S)$ \quad for all $S\neq\emptyset.$
\end{minipage}\end{equation}
Since this makes $\emptyset$ an isolated point, it leaves
the image of $M$ in $L'$ closed, so $L'$ remains non-complete.

We remark that the join operation of $L,$ and hence of $L',$
is in general discontinuous; for given any non-eventually-constant
sequence $x_0, x_1,\dots$ in $M$ that approaches a limit $y\in M,$
we know that $\{x_0\}, \{x_1\},\dots$ approach $\{y\}$ in $L;$
but for any $z\neq y,$ if we apply $-\vee\{z\}$ we get the sequence
$\{x_0,z\}, \{x_1,z\},\dots,$ which cannot approach the
isolated point $\{y,z\}.$

So is it plausible that continuity of the meet and
join operations, combined
with~\eqref{d.dxxvy},~\eqref{d.incrCauchy} and~\eqref{d.decrCauchy},
would imply completeness of a metric lattice?
Still no:  if we apply the above construction of $L$ with $M$
taken to be a discrete non-complete metric space (e.g.,
$\{n^{-1}\mid n\geq 1\}\subseteq[0,1]),$ then $L$ is also discrete.
(The only elements we don't already know are
isolated are the singletons $\{x\}$ for $x\in M;$
but taking $\varepsilon$ such that the ball of radius
$\varepsilon$ about $x$ in $M$ contains no other points, we find that
the ball of radius
$\varepsilon$ about $\{x\}$ in $L$ also contains no other points.)
Hence $L',$ metrized as in~\eqref{d.dempty}, is also discrete;
and any operation on a discrete space is
continuous, though $L'$ is, we have shown, non-complete.

\section{Open questions}\label{S.open}

I have not examined the question of whether the {\em conjunction}
of~\eqref{d.dxxvy} and~\eqref{d.dxxwy} might somehow force a metrized
lattice satisfying~\eqref{d.incrCauchy}
and~\eqref{d.decrCauchy} to be complete as a metric space.
(In the discrete $L'$ constructed above satisfying~\eqref{d.dxxvy},
the inequality~\eqref{d.dxxwy},
i.e., $d_{L'}(S,\,S\cap T)\leq d_{L'}(S,\,T),$ holds when
$S\cap T\neq\emptyset,$ but not, in general, when $S\cap T=\emptyset.)$

The referee has asked whether Gleason's original question
has the same answer if restricted to the case
where $R$ an integral domain.
I do not know whether this is so, with or without the
assumption that $R$ is complete in the given topology.
It seems an interesting question.

\section{Acknowledgements}\label{S.ackn}
I am indebted to David Handelman and Hannes Thiel
for pointing me to results in the literature on
completeness of metric spaces arising in measure theory,
and to the referee for several useful suggestions.

\end{document}